\theoremstyle{plain}
\newtheorem{theorem}{Theorem}
\newtheorem{proposition}{Proposition}
\newtheorem{corollary}{Corollary}
\theoremstyle{definition}
\newtheorem{definition}{Definition}
\theoremstyle{remark}
\newtheorem{example}{Example}
\newtheorem{question}{Question}
\begin{document}

\title{Groups of Binary Operations and Binary $G$-Spaces}
\author{Pavel S. Gevorgyan}

\address{Moscow State Pedagogical University}

\email{pgev@yandex.ru}

\begin{abstract}
The  group of continuous binary operations on a topological space is studied;
its relationship with the group of homeomorphisms is established.
The  category of binary $G$-spaces and bi-equivariant maps is constructed, which is a 
natural extension of the category of $G$-spaces and equivariant maps.  
Results related to the foundations of the theory of binary $G$-spaces are obtained.
\end{abstract}

\keywords{Compact-open topology, group of homeomorphisms, group of binary operations,  
binary $G$-space}
\subjclass{54H15; 57S99}

\maketitle

\section{Auxiliary Results and Notation}

Throughout this paper, by a space we mean a topological space. All spaces are assumed to be 
Hausdorff.

We denote the category of topological spaces and continuous maps by $Top$.

By $C(X,Y)$ we  denote the space of all continuous maps of $X$ to $Y$ 
endowed with the compact-open topology, that is, the topology generated by the subbase consisting 
of all sets of the form $W(K, U)=\{f:X\to Y; \ f(K)\subset U\}$, 
where $K$ a compact subset of $X$ and $U$ is an open subset of $Y$.

All continuous function spaces are considered in the compact-open topology.

If $G$ is a topological group, then there is  a natural group operation on $C(X,G)$: 
given any continuous maps $f,g\in C(X,G)$, their product $fg\in C(X,G)$ is defined by 
$(fg)(x)=f(x)g(x)$ for all $x\in X$.

\begin{theorem}[\cite{McCoy}]\label{th_McCoy}
If $G$ is a topological group, then so is $C(X,G)$.
\end{theorem}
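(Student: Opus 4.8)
The plan is to verify, in order, that the pointwise operations are well defined on $C(X,G)$ and satisfy the group axioms, that the multiplication $m\colon C(X,G)\times C(X,G)\to C(X,G)$, $(f,g)\mapsto fg$, is continuous, and that the inversion $i\colon f\mapsto f^{-1}$, where $f^{-1}(x)=f(x)^{-1}$, is continuous. The first point is immediate: $fg$ is the composite of the map $(f,g)\colon X\to G\times G$, $x\mapsto (f(x),g(x))$ (continuous by the universal property of the product), with the multiplication of $G$, and $f^{-1}$ is the composite of $f$ with the inversion of $G$; hence both land in $C(X,G)$, and associativity, the identity law (the identity being the constant map at $e\in G$) and the inverse law hold in $C(X,G)$ because they hold pointwise in $G$.

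For the continuity of inversion I would argue directly from the subbase. If $W(K,U)$ is a subbasic neighbourhood of $f^{-1}$, then $f(x)^{-1}\in U$, i.e. $f(x)\in U^{-1}$, for every $x\in K$, where $U^{-1}=\{u^{-1}:u\in U\}$ is open since inversion is a self-homeomorphism of $G$. Thus $f\in W(K,U^{-1})$, and in fact $i^{-1}\bigl(W(K,U)\bigr)=W(K,U^{-1})$ exactly; since the $W(K,U)$ form a subbase, $i$ is continuous (indeed a homeomorphism, being its own inverse).

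The substantive step is the continuity of $m$. Fix $(f,g)$ and a subbasic neighbourhood $W(K,U)$ of $fg$, so $f(x)g(x)\in U$ for all $x\in K$. By continuity of the multiplication of $G$, for each $x\in K$ choose open sets $V_x\ni f(x)$ and $V_x'\ni g(x)$ with $V_xV_x'\subseteq U$; then $O_x:=f^{-1}(V_x)\cap g^{-1}(V_x')$ is an open neighbourhood of $x$, and finitely many $O_{x_1},\dots,O_{x_n}$ cover the compact set $K$. Here I would use that $K$, being compact Hausdorff, is regular, so each of its points has a compact neighbourhood in $K$ lying inside one of the $O_{x_i}$; passing to a finite subcover and grouping by index yields compact sets $K_1,\dots,K_n$ with $K_i\subseteq O_{x_i}$ and $K=\bigcup_i K_i$. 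Then $f\in\bigcap_i W(K_i,V_{x_i})$ and $g\in\bigcap_i W(K_i,V_{x_i}')$, which are open neighbourhoods of $f$ and $g$, and whenever $f'$ lies in the first and $g'$ in the second, any $x\in K$ lies in some $K_i$, giving $f'(x)g'(x)\in V_{x_i}V_{x_i}'\subseteq U$; hence $f'g'\in W(K,U)$, so $m$ carries a neighbourhood of $(f,g)$ into $W(K,U)$.

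I expect the only real obstacle to be the passage from the open cover $\{O_{x_i}\}$ of $K$ to a \emph{compact}-set cover of $K$ subordinate to it; without this one is left with relatively open pieces of $K$, which do not yield subbasic sets of the compact-open topology, and this is exactly where the standing assumption that $X$ (hence $K$) is Hausdorff enters. A more formal alternative would avoid the argument by combining two standard facts about the compact-open topology — the natural homeomorphism $C(X,G\times G)\cong C(X,G)\times C(X,G)$ and the continuity of post-composition $h\mapsto\varphi\circ h$ for fixed continuous $\varphi$ — applied with $\varphi$ the multiplication and the inversion of $G$; but I would prefer to present the self-contained covering argument above.
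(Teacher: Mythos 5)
The paper does not prove this statement at all: it is quoted from the literature (McCoy--Ntantu) and used as a black box, so there is no internal argument to compare yours against. Your proof is, however, correct and self-contained, and it is the standard one. The algebraic part and the continuity of inversion are routine, and you identify the right identity $i^{-1}\bigl(W(K,U)\bigr)=W(K,U^{-1})$, which immediately gives that inversion is a self-homeomorphism. The only genuinely delicate step is the continuity of multiplication, and you handle it properly: the point is precisely that after covering $K$ by the open sets $O_{x_i}=f^{-1}(V_{x_i})\cap g^{-1}(V_{x_i}')$ one must refine to a cover of $K$ by \emph{compact} sets $K_i\subseteq O_{x_i}$ in order to produce subbasic neighbourhoods $W(K_i,V_{x_i})$, and your appeal to regularity of the compact Hausdorff set $K$ (take relatively open sets whose closures in $K$ sit inside some $O_{x_i}$, extract a finite subcover, group closures by index) does exactly that; this is where the paper's standing Hausdorff hypothesis is used. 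Your closing remark is also accurate: the ``slicker'' route via $C(X,G\times G)\cong C(X,G)\times C(X,G)$ and post-composition with the multiplication of $G$ merely relocates the same compact-refinement argument into the proof of that homeomorphism, so presenting the covering argument directly is the more honest choice. I see no gap.
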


The group of all self-homeomorphisms of $X$ is denoted by $H(X)$. This group is not generally 
a topological group. However,  the following theorem is valid.

\begin{theorem}[\cite{Arens}]\label{th_arens}
If a space $X$ is locally compact and locally connected, then $H(X)$ is a topological group.
\end{theorem}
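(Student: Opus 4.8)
The plan is to handle the two group operations of $H(X)$ separately, the subspace topology on $H(X)\subset C(X,X)$ being understood. Continuity of the multiplication $(g,h)\mapsto g\circ h$ is the easy half: since $X$ is locally compact Hausdorff, the composition map $C(X,X)\times C(X,X)\to C(X,X)$ is continuous for the compact-open topology --- the usual argument factors through continuity of the evaluation $X\times C(X,X)\to X$ --- and the multiplication on $H(X)$ is just its restriction. So the real content of the theorem is continuity of inversion $\iota\colon H(X)\to H(X)$, $\iota(h)=h^{-1}$, and it is here that both hypotheses on $X$ enter.

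To prove $\iota$ continuous, I would fix $h\in H(X)$ and a subbasic neighbourhood $W(C,U)$ of $h^{-1}$, so that $C$ is compact, $U$ open, and $h^{-1}(C)\subset U$; applying the bijection $h$ this reads $C\subset h(U)$. The goal is an open set $N\ni h$ in $H(X)$ with $g^{-1}(C)\subset U$ for every $g\in N$. Using local compactness and local connectedness of $X$, cover $C$ by finitely many connected open sets $V_1,\dots,V_n$ whose closures are compact and contained in $h(U)$, and choose $p_i\in V_i$. For each $i$ the set $h^{-1}(\overline{V_i})$ is a compact subset of $U$, so (by regularity of the locally compact Hausdorff space $X$) there is an open $U_i$ with $h^{-1}(\overline{V_i})\subset U_i\subset\overline{U_i}\subset U$ and $\overline{U_i}$ compact. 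Its frontier $\partial U_i=\overline{U_i}\setminus U_i$ is compact and $h(\partial U_i)$ is disjoint from $\overline{V_i}$, so pick disjoint open sets $P_i\supset\overline{V_i}$ and $Q_i\supset h(\partial U_i)$; and pick a small open $R_i\ni h^{-1}(p_i)$ with $\overline{R_i}$ compact, $\overline{R_i}\subset U_i$ and $h(\overline{R_i})\subset V_i$. Then $N_i:=\bigl(W(\partial U_i,Q_i)\cap W(\overline{R_i},V_i)\bigr)\cap H(X)$ is open in $H(X)$ and contains $h$, and I claim $N:=\bigcap_{i=1}^{n}N_i$ works.

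The crux is a connectedness argument proving that $g\in N$ forces $V_i\subset g(U_i)$ for every $i$. Since $g$ is a homeomorphism, $g(U_i)$ is open, $g(\overline{U_i})$ is closed, and $g(\overline{U_i})\setminus g(U_i)=g(\partial U_i)\subset Q_i$, which misses $V_i\subset\overline{V_i}\subset P_i$; hence $V_i$ is partitioned into the two open sets $V_i\cap g(U_i)$ and $V_i\setminus g(\overline{U_i})$. The former is nonempty, since $h^{-1}(p_i)\in R_i\subset U_i$ gives $g(h^{-1}(p_i))\in g(U_i)$ while $g\in W(\overline{R_i},V_i)$ gives $g(h^{-1}(p_i))\in V_i$. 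As $V_i$ is connected, $V_i\subset g(U_i)$. Therefore $C\subset\bigcup_i V_i\subset\bigcup_i g(U_i)=g\bigl(\bigcup_i U_i\bigr)\subset g(U)$, so $g^{-1}(C)\subset U$, i.e.\ $g^{-1}\in W(C,U)$. Thus $\iota(N)\subset W(C,U)$, and since such sets form a subbase, $\iota$ is continuous; together with continuity of composition this gives that $H(X)$ is a topological group.

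I expect the genuine difficulty to be exactly this last step: getting a \emph{finite} family of compact-open conditions on $g$ to control its behaviour near $C$ tightly enough --- simultaneously that $g$ pushes each frontier $\partial U_i$ off of $V_i$ and that $g$ still covers each connected piece $V_i$ by $g(U_i)$ --- and the choices of the nested sets $V_i\subset U_i$, of the separating pair $P_i,Q_i$, and of the anchor sets $R_i$ are precisely where local connectedness and local compactness (with the regularity it entails) are both genuinely needed; dropping either hypothesis breaks the argument.
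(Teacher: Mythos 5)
The paper gives no proof of this statement: it is quoted as an auxiliary result from Arens, so there is nothing internal to compare against. Your argument is correct and is essentially Arens's original one: continuity of composition is the standard consequence of local compactness (via continuity of evaluation), and the real work is your connectedness device for inversion --- splitting the connected set $V_i$ into the disjoint open pieces $V_i\cap g(U_i)$ and $V_i\setminus g(\overline{U_i})$, using $W(\partial U_i,Q_i)$ to keep $g(\partial U_i)$ off $V_i$ and $W(\overline{R_i},V_i)$ to anchor a point of $V_i$ inside $g(U_i)$, which forces $V_i\subset g(U_i)$ and hence $C\subset g(U)$. The supporting steps all check out: the finite cover of $C$ by connected open $V_i$ with compact closures in $h(U)$ uses exactly local connectedness plus local compactness, disjoint compacta in a Hausdorff space can be separated by open sets, and the degenerate case $\partial U_i=\emptyset$ does no harm. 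I see no gap.
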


Let $X$ be a  topological space,   and let $G$ be a topological group $G$ with 
identity element $e$. Suppose given a continuous map $\theta:G \times X \to X$ 
satisfying the  conditions
$$ (1) \quad \theta(g, \theta(h, x))=\theta(gh,x) \qquad\text{and}\qquad 
(2) \quad \theta(e,x)=x $$
for $g, h \in G$ and $x\in X$. Then  $X$ is called a $G$-space, and  
the continuous map $\theta :G \times X \to X$ is called the action of the group $G$ 
on the $G$-space $X$. In this case, we use the notation $\theta(g,x)=gx$. 

Let $X$ and $Y$ be $G$-spaces. A continuous map $f: X\to Y$ is said to be equivariant 
if $f(gx)=gf(x)$ for any $g\in G$ and $x\in X$.

All $G$-spaces and their equivariant maps form a category. This category is denoted by $G$-$Top$.

The symmetric group on a set $X$ is denoted by $S(X)$. In the case of a finite set $X$, 
this group is denoted by $S_n(X)$ or $S_n$, where $n$ is the number of elements in $X$. 
The order of $S_n(X)$ equals $n!$: $|S_n(X)|=n!$

Details on these notions, as well as on all definitions, notions, and results used 
in this paper without reference, can be  found in \cite{Br}, \cite{Eng}, \cite{Gev1} and \cite{Gev}.

\section{The Group of Continuous Binary Operations}

Let $X$ be any topological space.

A continuous map $f:X^2\to X$ is called a \textit{continuous binary operation}, 
or a \textit{binary map}, on $X$.
We denote the set of all continuous binary operations on $X$ by $C_2(X)$. 
On $C_2(X)$ we define an operation ``$\ast$'' by 
\begin{equation}\label{eq4}
(f*\varphi) (t,x)=f(t, \varphi(t,x))
\end{equation}
for all $t,x\in X$.

\begin{proposition}
The set $C_2(X)$ under the operation ``$\ast$'' is a topological semigroup with identity element 
$e(t,x)=x$, i.e., a topological monoid.
\end{proposition}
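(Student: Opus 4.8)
The plan is to verify three things: that $\ast$ is a well-defined associative operation on $C_2(X)$, that $e(t,x)=x$ is a two-sided identity, and that $\ast$ is jointly continuous in the compact-open topology. The first two are immediate. For well-definedness, note that $f\ast\varphi=f\circ\Phi_\varphi$, where $\Phi_\varphi\colon X^2\to X^2$, $\Phi_\varphi(t,x)=(t,\varphi(t,x))$, is continuous; hence $f\ast\varphi$ is continuous and belongs to $C_2(X)$. For associativity, $(f\ast(\varphi\ast\psi))(t,x)$ and $((f\ast\varphi)\ast\psi)(t,x)$ both unwind, by two uses of \eqref{eq4}, to $f(t,\varphi(t,\psi(t,x)))$. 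Since $(f\ast e)(t,x)=f(t,e(t,x))=f(t,x)$ and $(e\ast f)(t,x)=e(t,f(t,x))=f(t,x)$, the map $e=\mathrm{pr}_2$ (which is continuous, hence lies in $C_2(X)$) is a two-sided identity, so $(C_2(X),\ast)$ is a monoid.

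For continuity of $\ast\colon C_2(X)\times C_2(X)\to C_2(X)$ I would factor it through composition of maps. The assignment $\varphi\mapsto\Phi_\varphi$ is a continuous map $\iota\colon C(X^2,X)\to C(X^2,X^2)$: since $\Phi_\varphi=(\mathrm{pr}_1,\varphi)$, it is the composite of $\varphi\mapsto(\mathrm{pr}_1,\varphi)$ into $C(X^2,X)\times C(X^2,X)$ (continuous, the first coordinate being constant and the second the identity) with the canonical homeomorphism $C(X^2,X)\times C(X^2,X)\cong C(X^2,X\times X)$, which is valid for arbitrary spaces. Then $f\ast\varphi=f\circ\iota(\varphi)$, so $\ast$ equals the composition map $C(X^2,X)\times C(X^2,X^2)\to C(X^2,X)$, $(g,\Phi)\mapsto g\circ\Phi$, precomposed with $\mathrm{id}\times\iota$. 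It therefore suffices to know that this composition map is continuous.

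The continuity of composition in the compact-open topology is the main obstacle, since it is not a formal consequence of the definitions. Given a subbasic neighbourhood $W(K,U)$ of $f\ast\varphi$, so that $f$ maps the compact set $\Phi_\varphi(K)$ into $U$, producing a neighbourhood of $\varphi$ is routine: cover $K$ by finitely many product neighbourhoods on which $\varphi$ and $f$ are suitably controlled, shrink them (using that the compact Hausdorff set $K$ is normal) to compact sets $K_i$ still covering $K$, and take the corresponding intersection of sets $W(K_i,M_i)$. Producing a neighbourhood of $f$, however, forces one to enclose the pieces of $\Phi_\varphi(K)$ — whose perturbed images are only approximately known — inside compact sets on which every nearby map is required to land in $U$, and this step needs local compactness of $X$, hence of $X^2$. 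So I would prove the statement for $X$ locally compact by invoking the classical theorem that $C(Y,Z)\times C(X,Y)\to C(X,Z)$ is continuous when $Y$ is locally compact Hausdorff, applied with $Y=X^2$; pinning down exactly which hypothesis on $X$ this step needs — local compactness being the natural one — is where I expect the real difficulty to lie.
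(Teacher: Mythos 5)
Your algebraic verification (associativity by unwinding both sides to $f(t,\varphi(t,\psi(t,x)))$, and the two-sided identity $e=\mathrm{pr}_2$) coincides with the paper's proof, which consists of exactly those two computations and nothing else. Where you genuinely diverge is that you also address the word \emph{topological} in the statement: the paper never proves, or even mentions, that $\ast$ is jointly continuous on $C_2(X)\times C_2(X)$, nor that $f\ast\varphi$ is itself continuous (the latter is easy, as you note, via $f\ast\varphi=f\circ\Phi_\varphi$ with $\Phi_\varphi=(\mathrm{pr}_1,\varphi)$). Your reduction of $\ast$ to the composition map $C(X^2,X)\times C(X^2,X^2)\to C(X^2,X)$, precomposed with the continuous assignment $\varphi\mapsto(\mathrm{pr}_1,\varphi)$ and the canonical homeomorphism $C(X^2,X)\times C(X^2,X)\cong C(X^2,X\times X)$, is correct, and so is your diagnosis of the sticking point: joint continuity of composition in the compact-open topology is a theorem only when the intermediate space (here $X^2$, hence $X$) is locally compact Hausdorff, and separate continuity is all one gets for free in general. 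So your argument establishes the ``topological monoid'' claim for locally compact $X$ but not for arbitrary $X$. Since the paper's substantive later results about $H_2(X)$ are in any case restricted to locally compact, locally connected spaces, this is a defensible reading of the intended hypotheses; but as literally stated the proposition's topological content is simply not proved in the paper, and without some such hypothesis on $X$ the standard argument you invoke does not apply. In short, your proof does strictly more than the paper's, and the one step you flag as the real difficulty is precisely the step the paper omits.
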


\begin{proof}
Let $f$, $\varphi$, and $h$ be continuous binary operations on a topological space $X$. 
Let us check the semigroup axiom:
\begin{multline*}
[f*(\varphi*h)] (t,x)=f(t, (\varphi*h)(t,x))=f(t, \varphi(t, h(t,x)))\\
=(f*\varphi)(t, h(t,x))=[(f*\varphi)*h] (t,x).
\end{multline*}

The binary operation $e:X^2\to X$ defined by $e(t,x)=x$ is the identity element of the 
semigroup, because $f*e=e*f=f$. Indeed,
\[
(f*e)(t,x)=f(t, e(t,x))=f(t,x),
\]
\[
(e*f)(t, x)=e(t, f(t,x))=f(t,x).
\]
\end{proof}

\begin{definition}
A continuous binary operation $f\in C_2(X)$ is said to be \textit{invertible} if there exists 
a continuous binary operation $\varphi\in C_2(X)$ such that
\begin{equation}\label{eq6}
f*\varphi=\varphi*f=e.
\end{equation}
In this case, $f$ and $\varphi$ are said to be \textit{mutually inverse binary operations}.
\end{definition}

We denote  the set of all invertible elements of $C_2(X)$ by $H_2(X)$. The set $H_2(X)$ is a group.

\begin{example}
Let $X=\{a, b\}$ be a two-point discrete space. The symmetric group $S_2(X)$ of permutations of 
this set is the cyclic group $\mathbb{Z}_2$,  and the group of all invertible binary operations 
on $X=\{a, b\}$ is the group of order 4 with two generators 
$\varphi_1$ and $\varphi_2$, which are specified by 

\begin{center}
\begin{tabular}{|p{30pt}|p{30pt}|p{30pt}|p{30pt}|}
\hline
\raisebox{-2.80ex}[0cm][0cm]{$\varphi_1$:}&
$b$&
$a$&
 $a$ \\
\cline{2-4}&
$a$&
$b$&
$b$ \\
\cline{2-4}&
 &
a&
b \\
\hline
\end{tabular}
\end{center}

\begin{center}
\begin{tabular}{|p{30pt}|p{30pt}|p{30pt}|p{30pt}|}
\hline
\raisebox{-2.80ex}[0cm][0cm]{$\varphi_2$:}&
$b$&
$b$&
 $a$ \\
\cline{2-4}&
$a$&
$a$&
$b$ \\
\cline{2-4}&
 &
a&
b \\
\hline
\end{tabular}\vspace{2mm}
\end{center}
The multiplication table for this group is as follows:\vspace{2mm}

\begin{center}
\begin{tabular}{|p{30pt}|p{30pt}|p{30pt}|p{30pt}|}
\hline
$e$&
 $\varphi_1$&
 $\varphi_2$&
 $\varphi_3$ \\
\cline{1-4}
$\varphi_1$&
$e$&
$\varphi_3$ &
$\varphi_2$ \\
\cline{1-4}
$\varphi_2$&
 $\varphi_3$&
 $e$&
 $\varphi_1$ \\
\cline{1-4}
$\varphi_3$&
$\varphi_2$&
$\varphi_1$ &
$e$ \\
\hline
\end{tabular}\vspace{2mm}
\end{center}
This is the  Klein four-group.

In the case of a three-point set $X$, the order of $H_2(X)$ equals $(3!)^3=216$ 
(see Corollary~\ref{cor2} below).
\end{example}

\begin{theorem}\label{th01}
If a continuous binary operation $f:X^2\to X$ is invertible, then, for any $t\in X$, 
the continuous map $f_t:X\to X$ defined by 
\begin{equation}\label{eq5}
f_t(x)=f(t,x)
\end{equation}
for $x\in X$ is a homeomorphism.
\end{theorem}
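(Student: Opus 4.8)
The plan is to extract, from the binary inverse $\varphi$ of $f$, a pointwise (in $t$) inverse for each map $f_t$. The key observation is that the defining equation $f * \varphi = e$ unpacks, for each fixed $t \in X$, into a statement about the composition of the "slices" $f_t$ and $\varphi_t$.

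First I would fix $t \in X$ and let $\varphi \in C_2(X)$ be the binary operation inverse to $f$, so that $f * \varphi = \varphi * f = e$. Writing out $f * \varphi = e$ at the point $(t,x)$ gives $f(t, \varphi(t,x)) = x$ for all $x \in X$, which in the slice notation \eqref{eq5} reads $f_t(\varphi_t(x)) = x$, i.e. $f_t \circ \varphi_t = \mathrm{id}_X$. Symmetrically, $\varphi * f = e$ evaluated at $(t,x)$ yields $\varphi(t, f(t,x)) = x$, that is $\varphi_t \circ f_t = \mathrm{id}_X$. Hence $\varphi_t$ is a two-sided inverse of $f_t$ as a set map, so $f_t$ is a bijection of $X$.

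It remains to check continuity of $f_t$ and of its inverse $\varphi_t = f_t^{-1}$. Both are immediate: since $f: X^2 \to X$ is continuous and the map $x \mapsto (t,x)$ is a continuous embedding of $X$ into $X^2$, the composite $f_t$ is continuous; the same argument applied to $\varphi$ shows $\varphi_t$ is continuous. Therefore $f_t$ is a homeomorphism of $X$, with $(f_t)^{-1} = \varphi_t$.

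I do not anticipate a serious obstacle here; the statement is essentially a matter of correctly unwinding the definition \eqref{eq4} of the operation $\ast$ and reading off what invertibility \eqref{eq6} says slice-by-slice. The only point requiring a little care is making sure both one-sided identities ($f * \varphi = e$ and $\varphi * f = e$) are used, so that each $f_t$ gets a genuine two-sided inverse rather than merely a one-sided one; this is what the definition of invertible binary operation supplies.
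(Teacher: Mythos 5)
Your proposal is correct and follows essentially the same route as the paper: both extract the slice $\varphi_t$ of the binary inverse and verify that $f_t\circ\varphi_t=\varphi_t\circ f_t=1_X$, with continuity of the slices being immediate. (The paper additionally gives a separate injectivity argument for $f_t$, which, as your proof shows, is redundant once the two-sided inverse is in hand.)
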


\begin{proof}
Suppose that a binary operation $f:X^2\to X$ is invertible, i.e., there exists a binary operation  
$\varphi\in C_2(X)$ satisfying relations~\eqref{eq6}.
Take any element $t\in X$. Let us prove that the  continuous map $f_t:X\to X$ is a homeomorphism.

The map $f_t$ is a monomorphism. Indeed, suppose that $f_t(x)=f_t(x')$, i.e., $f(t,x)=f(t,x')$. 
Then
\begin{multline*}
    x=e(t,x)=(\varphi*f)(t,x)=\varphi(t,f(t,x))= \\
    =\varphi(t,f(t,x'))=(\varphi*f)(t,x')=e(t,x')=x'.
\end{multline*}

The continuous map $\varphi_t:X\to X$ defined by 
\begin{equation*}
    \varphi_t(x)=\varphi(t,x) 
\end{equation*}
is inverse to $f_t:X\to X$. Indeed,
\begin{multline*}
    (f_t\circ \varphi_t(x) = f_t(\varphi_t(x))=f_t(\varphi(t,x))=f(t,\varphi(t,x))=\\
    =(f*\varphi)(t,x)=e(t,x)=x,
\end{multline*}
 i.e., $f_t\circ \varphi_t=1_X$.  A similar argument proves that 
$\varphi_t\circ f_t=1_X$.
\end{proof}

It follows from Theorem~\ref{th01} that if an invertible binary operation $f:X^2\to X$ 
is represented in the form of a family
of homeomorphisms $\{f_t\}$, $t\in X$, then the inverse binary operation has the form 
$\{f^{-1}_t\}$.

The converse of Theorem~\ref{th01} is true for locally compact and locally connected spaces.

\begin{theorem}\label{th011}
Suppose that a space $X$ is locally compact and locally connected and $f:X^2\to X$ 
is a continuous binary operation. If the map $f_t:X\to X$ is a homeomorphism 
 for each $t\in X$, then the binary operation $f$ is invertible, and the inverse binary 
operation $f^{-1}$ is defined by $f^{-1}(t,x)=f_t^{-1}(x)$.
\end{theorem}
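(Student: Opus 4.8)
The plan is to define $\varphi : X^2 \to X$ by $\varphi(t,x) = f_t^{-1}(x)$ and to show two things: first that $\varphi$ is continuous (so that $\varphi \in C_2(X)$), and second that $f * \varphi = \varphi * f = e$, which is the easy algebraic part. For the algebra, note $(f*\varphi)(t,x) = f(t,\varphi(t,x)) = f_t(f_t^{-1}(x)) = x = e(t,x)$, and symmetrically $(\varphi*f)(t,x) = f_t^{-1}(f_t(x)) = x$; these identities hold pointwise once $\varphi$ is known to be a well-defined continuous map, and they use nothing beyond the hypothesis that each $f_t$ is a homeomorphism. So the entire content of the theorem is the continuity of $(t,x) \mapsto f_t^{-1}(x)$.

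For continuity, the strategy is to factor $\varphi$ through the homeomorphism group and invoke Theorem \ref{th_arens}. First I would observe that $f : X^2 \to X$ corresponds, under the exponential law, to a map $\hat f : X \to C(X,X)$, $\hat f(t) = f_t$; since $X$ is locally compact Hausdorff, the exponential law gives that $\hat f$ is continuous because $f$ is. By hypothesis each $f_t$ is a homeomorphism, so $\hat f$ actually takes values in $H(X) \subset C(X,X)$; one must check that $\hat f : X \to H(X)$ is continuous, i.e. continuous into $H(X)$ with its subspace topology from $C(X,X)$ — but that is automatic since $H(X)$ carries exactly the subspace topology here. Now, because $X$ is locally compact and locally connected, Theorem \ref{th_arens} tells us $H(X)$ is a topological group; in particular inversion $\iota : H(X) \to H(X)$, $\iota(h) = h^{-1}$, is continuous. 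Hence the composite $t \mapsto \hat f(t) \mapsto (\hat f(t))^{-1} = f_t^{-1}$ is a continuous map $X \to H(X) \hookrightarrow C(X,X)$.

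Finally I would run the exponential law in the other direction: the evaluation map $\mathrm{ev} : C(X,X) \times X \to X$, $(g,x) \mapsto g(x)$, is continuous when $X$ is locally compact Hausdorff, so $\varphi(t,x) = \mathrm{ev}\big((\hat f(t))^{-1}, x\big)$ is continuous as a composite $X^2 \to C(X,X) \times X \to X$. This shows $\varphi \in C_2(X)$, and combined with the pointwise identities above it establishes $f * \varphi = \varphi * f = e$, so $f$ is invertible with $f^{-1}(t,x) = f_t^{-1}(x)$ as claimed. The main obstacle — and the only place the full strength of the hypotheses is used — is the passage through $H(X)$: local compactness is needed for both directions of the exponential law (continuity of $\hat f$ and continuity of evaluation), while local connectedness, together with local compactness, is precisely what Arens's theorem (Theorem \ref{th_arens}) requires to guarantee that inversion in $H(X)$ is continuous. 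Without continuity of inversion one could not conclude that $t \mapsto f_t^{-1}$ is continuous, even though each $f_t^{-1}$ individually exists.
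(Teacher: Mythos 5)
Your proposal is correct, but it takes a genuinely different route from the paper. The paper proves the continuity of $\varphi(t,x)=f_t^{-1}(x)$ by hand: given a neighborhood $W$ of $y_0=f_{t_0}^{-1}(x_0)$ with compact closure, it chooses a compact \emph{connected} neighborhood $K$ of $x_0$ with $f_{t_0}^{-1}(K)\subset W$, uses continuity of $f$ on the compact sets $\{y_0\}$ and $W^C\cap\overline{W}$ to control $f_t$ for $t$ in a neighborhood $U$ of $t_0$, and then uses connectedness of $f_t^{-1}(K)$ to force $f_t^{-1}(K^\circ)\subset W$ --- in effect reproving, in this parametrized setting, exactly the fragment of Arens's theorem that is needed. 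You instead pass through the exponential law ($f\mapsto\hat f:X\to C(X,X)$, continuity of evaluation for locally compact $X$) and delegate the crux --- continuity of inversion in $H(X)$ --- to Theorem~\ref{th_arens}. This is legitimate and non-circular, since Theorem~\ref{th_arens} is quoted from the literature rather than derived from Theorem~\ref{th011}; indeed your argument is essentially the content of the later Theorem~\ref{th1} read backwards. What your approach buys is brevity and a transparent accounting of where each hypothesis enters (local compactness for the exponential law, local connectedness only through Arens); what the paper's approach buys is self-containment, at the cost of redoing the connectedness argument that Arens's theorem encapsulates. One small quibble: the forward direction of the exponential law (continuity of $f$ implies continuity of $\hat f$ into the compact-open topology) does not actually require local compactness; only the continuity of evaluation does.
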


\begin{proof}
Consider the binary operation $\varphi$ given by $\varphi(t,x)=f_t^{-1}(x)$. Let us prove that 
this  binary operation is continuous and inverse to $f:X^2\to X$: $\varphi=f^{-1}$.

First, we prove the continuity of the map $\varphi:X^2\to X$. Let $(t_0,x_0)\in X^2$ be 
any point, and let $\varphi(t_0,x_0)=f_{t_0}^{-1}(x_0)=y_0$. Consider any open neighborhood 
$W\subset X$ of $y_0$ such that the closure $\overline{W}$ is compact. There exists a 
compact connected neighborhood  $K$ of $x_0$ for which 
\begin{equation}\label{KW}
f_{t_0}^{-1}(K)\subset W.
\end{equation}

We denote the interior of $K$ by $K^{\circ}$. We have 
\begin{equation}\label{xoinko}
f_{t_0}(y_0)=x_0\in K^{\circ}.
\end{equation}

Inclusion \eqref{KW}  implies 
\begin{equation}\label{dop}
f_{t_0}(W^C\cap \overline{W})\subset K^C,
\end{equation}
where $W^C$ and $K^C$ are the complements of $W$ and $K$, respectively.

Since $f:X^2\to X$ is a continuous binary operation,  $\{y_0\}$ 
and $W^C\cap \overline{W}$ are compact supsets of $X$, and $K^{\circ}$ and $K^C$ are open subsets, 
it follows from \eqref{xoinko} and \eqref{dop} that the point $t_0$ has an open neighborhood $U$ 
such  that, for any $t\in U$,  we have
\begin{equation}\label{xoinko1}
f_{t}(y_0)\in K^{\circ}
\end{equation}
and
\begin{equation}\label{dop1}
f_{t}(W^C\cap \overline{W})\subset K^C. 
\end{equation}

Inclusion \eqref{dop1} implies that 
\begin{equation*}\label{dop2}
K\subset f_{t}(W\cup \overline{W}^C)
\end{equation*}
for any $t\in U$. Therefore,
$$f_{t}^{-1}(K)\subset W\cup \overline{W}^C.$$
Since $f_{t}^{-1}(K)$ is connected and $W$ and $\overline{W}^C$ are disjoint open sets, 
 it follows from the last inclusion that $f_{t}^{-1}(K)$ is contained in one of the sets  
$W$ and $\overline{W}^C$. However, by virtue of \eqref{xoinko1}, we obviously have 
$f_{t}^{-1}(K)\subset W$. Hence 
\begin{equation}\label{ko}
f_{t}^{-1}(K^\circ)\subset W
\end{equation}
for all $t\in U$.

Thus, given any open neighborhood $W$ of $y_0=f_{t_0}^{-1}(x_0)$,  we have found open 
neighborhoods $U$ of $t_0$ and $K^\circ$ of $x_0$ for which \eqref{ko} holds. 
This proves the continuity of the binary operation $\varphi(t,x)=f_t^{-1}(x)$.

It is easy to show that the continuous binary operation $\varphi(t,x)=f_t^{-1}(x)$ 
is inverse to $f:X^2\to X$. Indeed,
$$(f*\varphi)(t,x)=f(t,\varphi(t,x))=f(t,f_t^{-1}(x))=f_t(f_t^{-1}(x))=x,$$
i.e. $f*\varphi=e$. The relation $\varphi*f=e$ is proved in a similar way.
\end{proof}

Theorems \ref{th01} and  \ref{th011} imply the following criterion for the invertibility 
of continuous binary operations on locally compact and locally connected spaces.

\begin{theorem}\label{th0111}
A continuous binary operation $f:X^2\to X$ 
on a  locally compact locally connected space $X$ is 
invertible if and only if 
the continuous map $f_t:X\to X$  defined by \eqref{eq5} is a homeomorphism for any $t\in X$.
\end{theorem}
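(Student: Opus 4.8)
The plan is to observe that Theorem~\ref{th0111} is a direct consequence of the two preceding theorems, so the proof amounts to little more than assembling them. I would first unpack the biconditional into its two implications and treat each separately. For the ``only if'' direction, suppose $f:X^2\to X$ is invertible. Then Theorem~\ref{th01} applies verbatim (it requires no hypothesis on $X$ beyond being a Hausdorff space) and immediately yields that $f_t:X\to X$ is a homeomorphism for every $t\in X$. For the ``if'' direction, suppose that $f_t$ is a homeomorphism for each $t\in X$; since $X$ is assumed locally compact and locally connected, Theorem~\ref{th011} applies and gives that $f$ is invertible, with inverse $f^{-1}(t,x)=f_t^{-1}(x)$.

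I would then note explicitly where each hypothesis is used: local compactness and local connectedness are needed only for the ``if'' direction, via Theorem~\ref{th011}, because that is where one must produce a \emph{continuous} binary operation $\varphi(t,x)=f_t^{-1}(x)$ from the pointwise-defined inverses; the ``only if'' direction is unconditional. A brief remark to this effect makes the statement of the combined criterion more transparent, and it is worth recording since it clarifies that the topological restrictions in Theorem~\ref{th0111} are not symmetric in the two directions.

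There is essentially no obstacle here: all the real work has already been carried out in the proofs of Theorems~\ref{th01} and~\ref{th011}, the genuine content being the continuity argument inside Theorem~\ref{th011} (the connectedness trick that pins $f_t^{-1}(K)$ inside $W$ rather than $\overline{W}^C$). The only thing to be careful about is to phrase the conclusion so that the formula $f^{-1}(t,x)=f_t^{-1}(x)$ for the inverse is carried through, since it is part of the useful content even though the theorem statement as written only asserts the equivalence. Accordingly the write-up will be a short two-paragraph argument: one paragraph citing Theorem~\ref{th01} for necessity and one citing Theorem~\ref{th011} for sufficiency, followed by the closing of the proof environment.
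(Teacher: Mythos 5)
Your proposal is correct and matches the paper exactly: the paper derives Theorem~\ref{th0111} as an immediate combination of Theorems~\ref{th01} (necessity, no hypotheses on $X$ needed) and~\ref{th011} (sufficiency, using local compactness and local connectedness), which is precisely your two-paragraph plan. Your added remark about where the topological hypotheses are actually used is accurate and a reasonable clarification, though the paper does not state it.
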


\begin{proposition}
The group $H(X)$  of all self-homeomorphisms of a topological space $X$ is isomorphic 
(algebraically and topologically) to a subgroup of the group  $H_2(X)$ of invertible binary 
operations.
\end{proposition}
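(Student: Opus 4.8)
The plan is to exhibit the embedding explicitly. To each homeomorphism $h\in H(X)$ I associate the binary operation $\bar h\in C_2(X)$ defined by $\bar h(t,x)=h(x)$; this is continuous, being the composite of the projection $\pi\colon X^2\to X$, $(t,x)\mapsto x$, with $h$. First I would check that $\bar h$ is invertible with inverse $\overline{h^{-1}}$: indeed $(\bar h*\overline{h^{-1}})(t,x)=\bar h(t,h^{-1}(x))=h(h^{-1}(x))=x=e(t,x)$, and symmetrically, so $\bar h\in H_2(X)$. Next, the assignment $\Phi\colon H(X)\to H_2(X)$, $h\mapsto\bar h$, is a group homomorphism, since $(\bar h_1*\bar h_2)(t,x)=\bar h_1(t,h_2(x))=h_1(h_2(x))=\overline{h_1\circ h_2}(t,x)$; it is injective because $\bar h_1=\bar h_2$ forces $h_1(x)=\bar h_1(t,x)=\bar h_2(t,x)=h_2(x)$ for all $x\in X$. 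Hence $\Phi$ maps $H(X)$ algebraically isomorphically onto the subgroup $\Phi(H(X))\leq H_2(X)$.

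It remains to prove that $\Phi$ is a homeomorphism onto its image, where $H(X)$ carries the subspace topology from $C(X,X)$ and $H_2(X)$ the subspace topology from $C_2(X)=C(X^2,X)$, both compact-open. For the continuity of $\Phi$ I would use that precomposition with a fixed continuous map is continuous for compact-open topologies: since $\bar h=h\circ\pi$, the preimage under $\Phi$ of a subbasic set $W(K,U)$ with $K\subset X^2$ compact and $U\subset X$ open is exactly $W(\pi(K),U)$, and $\pi(K)$ is compact. For the continuity of the inverse, fix a point $t_0\in X$ (the case $X=\varnothing$ being trivial) and consider the map $C(X^2,X)\to C(X,X)$ sending $g$ to $g\circ\iota_{t_0}$, where $\iota_{t_0}\colon X\to X^2$, $x\mapsto(t_0,x)$; this is again continuous, being precomposition with the continuous map $\iota_{t_0}$. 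On $\Phi(H(X))$ this map sends $\bar h$ back to $h$, so its restriction is $\Phi^{-1}$, which is therefore continuous.

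I do not anticipate a genuine obstacle: the construction is forced and all the verifications are routine. The only point requiring a moment's care is the continuity of $\Phi^{-1}$, which rests on the choice of a base point $t_0$ together with the standard fact that precomposition $C(Y,Z)\to C(Y',Z)$ with a fixed continuous map $Y'\to Y$ is continuous in the compact-open topology; one should also dispatch the degenerate case $X=\varnothing$ separately, where $H(X)$ and $H_2(X)$ are both trivial.
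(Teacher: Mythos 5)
Your construction is exactly the one in the paper: the paper assigns to each $f\in H(X)$ the binary operation $\tilde f(t,x)=f(x)$ and observes that $f\mapsto\tilde f$ is the required isomorphism onto a subgroup of $H_2(X)$. Your write-up follows the same route, only spelling out the homomorphism, injectivity, and compact-open topology verifications that the paper leaves implicit, and all of those checks are correct.
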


\begin{proof}
To each $f\in H(X)$ we assign the continuous map $\tilde{f}:X^2\to X$ defined by 
$\tilde{f}(t, x)=f(x)$, $t,x\in X$. Obviously, $\widetilde{f^{-1}}=\tilde{f}^{-1}$. Thus, 
$\tilde{f}$ is a continuous invertible binary operation, i.e., $\tilde{f}\in H_2(X)$. 
The correspondence $f \to \tilde{f}$ is the required isomorphism between the group $H(X)$ 
and a subgroup of $H_2(X)$.
\end{proof}

\begin{theorem}\label{th1}
For any  locally compact locally connected space $X$, the group $H_2(X)$ is 
isomorphic (algebraically and topologically) to $C(X,H(X))$. 
\end{theorem}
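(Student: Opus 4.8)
The plan is to exhibit an explicit bijection between $H_2(X)$ and $C(X,H(X))$ and check that it is a homeomorphism and a group isomorphism. By Theorem~\ref{th0111}, an invertible binary operation $f:X^2\to X$ on a locally compact locally connected space is exactly a continuous binary operation such that each section $f_t:X\to X$, $f_t(x)=f(t,x)$, is a homeomorphism. So I would define $\Phi:H_2(X)\to C(X,H(X))$ by $\Phi(f)=\hat f$, where $\hat f(t)=f_t$. The first task is to show $\hat f$ really is a continuous map $X\to H(X)$: continuity into $C(X,X)$ (with the compact-open topology) is the standard exponential-law fact that a continuous map $X^2\to X$ corresponds to a continuous map $X\to C(X,X)$ when $X$ is locally compact (Hausdorff); since by Theorem~\ref{th011} each $f_t$ is a homeomorphism and the inverse binary operation $f^{-1}(t,x)=f_t^{-1}(x)$ is also continuous, the assignment $t\mapsto f_t^{-1}$ is continuous as well, so $\hat f$ lands in $H(X)$ and is continuous there (recall $H(X)$ is a topological group by Theorem~\ref{th_arens}, with the topology for which both $h\mapsto h$ and $h\mapsto h^{-1}$ are continuous). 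Conversely, given $\alpha\in C(X,H(X))$, define $\check\alpha:X^2\to X$ by $\check\alpha(t,x)=\alpha(t)(x)$; the exponential law again gives continuity of $\check\alpha$, and each section $\check\alpha_t=\alpha(t)$ is a homeomorphism, so by Theorem~\ref{th0111} we get $\check\alpha\in H_2(X)$. Clearly $\Phi$ and $\alpha\mapsto\check\alpha$ are mutually inverse, so $\Phi$ is a bijection.

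Next I would verify that $\Phi$ is a group homomorphism. Here the key computation is to match the operation ``$\ast$'' on $C_2(X)$ with the pointwise multiplication on $C(X,H(X))$ induced (via Theorem~\ref{th_McCoy}) by composition in $H(X)$. For $f,\varphi\in H_2(X)$ and $t\in X$ we have $(f\ast\varphi)_t(x)=(f\ast\varphi)(t,x)=f(t,\varphi(t,x))=f_t(\varphi_t(x))=(f_t\circ\varphi_t)(x)$, so $\widehat{f\ast\varphi}(t)=\hat f(t)\circ\hat\varphi(t)$, which is precisely the product of $\hat f$ and $\hat\varphi$ in the group $C(X,H(X))$ when the group operation on $H(X)$ is taken to be composition. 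Since $\Phi$ sends the identity $e(t,x)=x$ to the constant map $t\mapsto 1_X$, it is a group isomorphism.

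Finally I would check that $\Phi$ is a homeomorphism. Both $H_2(X)\subset C_2(X)=C(X^2,X)$ and $C(X,H(X))$ carry compact-open topologies, and $H(X)\subset C(X,X)$ carries the subspace topology from the compact-open topology (this is the topology in which $H(X)$ is a topological group when $X$ is locally compact and locally connected). The exponential correspondence $C(X^2,X)\cong C(X,C(X,X))$ is a homeomorphism when $X$ is locally compact Hausdorff — this is the standard theorem on function spaces — and it restricts to a homeomorphism between the subspaces $H_2(X)$ and $C(X,H(X))$. I would spell this out by showing that subbasic open sets correspond: a subbasic set $W(K\times L,U)$ in $C(X^2,X)$ with $K,L$ compact and $U$ open corresponds under the exponential map to the set of $\alpha$ with $\alpha(K)\subset W(L,U)$, which is a subbasic open set in $C(X,C(X,X))$, and conversely, using that finite intersections of such sets form a base. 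The main obstacle — and the place where both local compactness and local connectedness are genuinely used — is establishing that $\Phi$ and its inverse actually take values in the right subspaces: local compactness drives the exponential law, while local connectedness (via Theorems~\ref{th011} and~\ref{th0111}) is what guarantees that invertibility of the binary operation is equivalent to each section being a homeomorphism, so that $H_2(X)$ corresponds exactly to $C(X,H(X))$ and not to something larger; the purely formal verifications (homomorphism property, bijectivity) are routine once this correspondence is pinned down.
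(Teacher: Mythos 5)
Your proposal is correct and follows essentially the same route as the paper: the paper defines the map in the opposite direction, $p(f)(t,x)=f(t)(x)$, and verifies bijectivity, the homomorphism property, and bicontinuity by the same subbasic-set computation $W(K\times K',U)\leftrightarrow W(K,W(K',U))$ that underlies the exponential law you invoke, with Theorem~\ref{th0111} playing the identical role of identifying $H_2(X)$ with the maps landing in $H(X)$.
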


\begin{proof}
Consider the map $p:C(X,H(X))\to H_2(X)$ defined by 
\begin{equation}\label{p(f)}
p(f)(t,x)=f(t)(x) 
\end{equation}
for $f\in C(X,H(X))$ and  $t,x\in X$.  The map $f(t) : X \to X$ is a homeomorphism 
for each $t\in X$. Therefore, by virtue of Theorem~\ref{th0111}, the binary operation 
$p(f):X\times X\to X$ is invertible, that is, indeed belongs to the group $H_2(X)$.

Let us prove that $p$ is a monomorphism. Take $f,g\in C(X,H(X))$, $f\neq g$. There exists 
a point $t_0\in X$ such that $f(t_0)\neq g(t_0)$. Since $f(t_0), g(t_0) \in H(X)$, 
it follows that $f(t_0)(x_0)\neq g(t_0)(x_0)$ for some $x_0\in X$. Thus, 
$p(f)(t_0,x_0)\neq p(g)(t_0,x_0)$, and hence $p(f)\neq p(g)$.

The map $p$ is also an epimorphism. Indeed, let $\varphi \in H_2(X)$ be any continuous binary 
operation. By virtue of Theorem~\ref{th0111}, the map $\varphi_t:X\to X$ defined by 
$\varphi_t(x)=\varphi(t,x)$, $t, x\in X$, is a homeomorphism. It is easy to see 
that the element $f\in  C(X,H(X))$ determined by the condition $f(t)=\varphi_t$ is the preimage 
of the binary operation $\varphi$. Indeed, we have  $p(f)(t,x)=f(t)(x)=\varphi_t(x)=\varphi(t,x)$.

Thus, the map $p^{-1}:H_2(X)\to C(X,H(X))$ defined by 
$$p^{-1}(\varphi)(t)(x)= \varphi(t,x)$$
for $\varphi \in H_2(X)$ and $t,x\in X$ is inverse to  $p:C(X,H(X))\to H_2(X)$.

The map $p$ is a homomorphism, that is, $p(f\circ g)=p(f)*p(g)$. Indeed, 
for any  $t,x\in X$, we have 
\begin{multline*}
p(f\circ g)(t,x)=(f\circ g)(t)(x)= (f(t)\circ g(t))(x)=f(t)(g(t)(x))\\
=f(t)(p(g)(t,x))=p(f)(t,p(g)(t,x))=(p(f)*p(g))(t,x).
\end{multline*}

Let us prove the continuity of $p$. Take any element $W(K\times K', U)$ of the subbase 
of the compact-open topology on $H_2(X)$, where $U\subset X$ open and $K,K'\subset X$ are compact 
subsets of $X$. Let us show that the preimage of $W(K\times K', U)$ is the set $W(K, W(K', U))$, 
which is an element of the subbase of the compact-open topology on $C(X,H(X))$. 
Indeed, for any  $\varphi \in W(K\times K', U)$ and $f=p^{-1}(\varphi)\in  C(X,H(X))$, we have 
\begin{multline*}
\varphi \in W(K\times K', U) \iff \varphi(t,x)\in U \iff p(f)(t,x)\in U \\
\iff f(t)(x)\in U \iff f\in W(K, W(K', U)),
\end{multline*}
where $t\in K$ and $x\in K'$ are arbitrary elements.

The continuity of the inverse map $p^{-1}:H_2(X)\to C(X,H(X))$ is proved in precisely the same way.
\end{proof}

\begin{corollary}
If $X$ is a locally compact locally connected space, then $H_2(X)$ is a topological group.
\end{corollary}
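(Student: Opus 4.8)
The plan is to combine the three structural theorems already established, so that the statement becomes essentially a formality. First I would invoke Theorem~\ref{th_arens}: since $X$ is locally compact and locally connected, the homeomorphism group $H(X)$ is a topological group. Next, taking $G=H(X)$ in Theorem~\ref{th_McCoy}, I conclude that $C(X,H(X))$, equipped with the compact-open topology and the pointwise group operation $(fg)(t)=f(t)g(t)$, is itself a topological group. Finally, Theorem~\ref{th1} provides a bijection $p\colon C(X,H(X))\to H_2(X)$ that is simultaneously a group isomorphism and a homeomorphism, so the topological group structure transports along $p$ to $H_2(X)$.

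The one point requiring a moment's care is that the group operation on $C(X,H(X))$ appearing in Theorem~\ref{th1} is indeed the pointwise operation covered by Theorem~\ref{th_McCoy}. In Theorem~\ref{th1} the product of $f,g\in C(X,H(X))$ is $f\circ g$ with $(f\circ g)(t)=f(t)\circ g(t)$, i.e.\ pointwise composition of homeomorphisms; since composition is precisely the group operation of $H(X)$, this coincides with the operation $(fg)(t)=f(t)g(t)$ to which Theorem~\ref{th_McCoy} refers. Hence the hypotheses line up exactly and no additional verification is needed.

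I do not expect a genuine obstacle here, as the conclusion is a formal consequence of Theorems~\ref{th_arens}, \ref{th_McCoy}, and~\ref{th1}. If one wished to be fully explicit, the only thing worth spelling out is the elementary observation that a bijection which is at once a group isomorphism and a homeomorphism carries the property of being a topological group from one side to the other: continuity of multiplication and of inversion on $H_2(X)$ follows by conjugating the corresponding maps on $C(X,H(X))$ by $p$ and $p^{-1}$, both of which are continuous by Theorem~\ref{th1}.
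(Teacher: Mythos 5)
Your proof is correct and follows exactly the paper's own argument: Theorem~\ref{th_arens} gives that $H(X)$ is a topological group, Theorem~\ref{th_McCoy} then gives that $C(X,H(X))$ is one, and the isomorphism of Theorem~\ref{th1} transports the structure to $H_2(X)$. Your extra remark checking that the pointwise-composition operation on $C(X,H(X))$ is the one covered by Theorem~\ref{th_McCoy} is a sensible precaution but does not change the route.
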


\begin{proof}
By Theorem~\ref{th_arens}, $H(X)$ is a topological group. Thus,  $C(X,H(X))$ is 
a topological group as well (by Theorem~\ref{th_McCoy}). According to Theorem~\ref{th1}, 
$H_2(X)$ is a topological group.
\end{proof}

\begin{corollary}\label{cor2}
If $|X|=n<\infty$, then $|H_2(X)|=(n!)^n$.
\end{corollary}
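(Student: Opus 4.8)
The plan is to use Theorem~\ref{th1}, which gives an isomorphism $H_2(X)\cong C(X,H(X))$ for locally compact locally connected spaces; a finite Hausdorff space is discrete, hence trivially locally compact and locally connected, so the theorem applies. When $X$ is discrete with $|X|=n$, the group $H(X)$ of self-homeomorphisms is just the symmetric group $S(X)=S_n$, of order $n!$. Moreover, every map out of a discrete space is continuous, so $C(X,H(X))$ is simply the set of all functions $X\to S_n$, i.e.\ the product $\prod_{t\in X}S_n = (S_n)^n$. Counting these gives $|C(X,H(X))| = (n!)^n$, and transporting along the isomorphism of Theorem~\ref{th1} yields $|H_2(X)| = (n!)^n$.

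Concretely, the steps I would carry out are: (i) observe that a finite Hausdorff space is discrete, so all the hypotheses of Theorems~\ref{th01}, \ref{th011}, \ref{th0111}, \ref{th1} are satisfied; (ii) note $H(X) = S(X)$ with $|S(X)| = n!$ for $|X| = n$; (iii) note that on a discrete space the compact-open topology on $C(X,H(X))$ is irrelevant for the counting, and $C(X,H(X))$ as a set equals $\mathrm{Map}(X, H(X))$, whose cardinality is $|H(X)|^{|X|} = (n!)^n$; (iv) invoke Theorem~\ref{th1} to conclude $|H_2(X)| = |C(X,H(X))| = (n!)^n$. One could alternatively argue directly from Theorem~\ref{th0111}: an invertible binary operation $f$ on the discrete $n$-point set $X$ is precisely a choice, for each $t\in X$, of a permutation $f_t\in S(X)$, and there are $(n!)^n$ such choices; but routing through Theorem~\ref{th1} is cleaner since that isomorphism has already been established.

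I expect essentially no obstacle here — the only thing to be careful about is the reduction to the discrete case. One must invoke the standing assumption that all spaces are Hausdorff, since a finite $T_1$ space is already discrete; without Hausdorffness the statement (and the ambient theorems) could fail. It is worth stating this reduction explicitly in the proof so that the application of Theorem~\ref{th1} is justified. Everything else is a routine cardinality count.

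\begin{proof}
Since $X$ is finite and Hausdorff, it is discrete, and hence locally compact and locally connected. Therefore Theorem~\ref{th1} applies and gives an isomorphism $H_2(X)\cong C(X,H(X))$; in particular $|H_2(X)| = |C(X,H(X))|$. Because $X$ is discrete, every self-bijection of $X$ is a homeomorphism, so $H(X) = S(X)$ and $|H(X)| = n!$. Again because $X$ is discrete, every map $X\to H(X)$ is continuous, so $C(X,H(X))$ coincides, as a set, with the set of all functions from $X$ to $H(X)$, which has cardinality $|H(X)|^{|X|} = (n!)^n$. Hence $|H_2(X)| = (n!)^n$.
\end{proof}
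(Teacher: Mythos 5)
Your proof is correct and follows the same route as the paper: apply Theorem~\ref{th1} to identify $H_2(X)$ with $C(X,H(X))$, note $H(X)=S_n$ of order $n!$, and count the $(n!)^n$ functions from the discrete $n$-point space. You are in fact slightly more careful than the paper, which omits the (worth stating) observation that a finite Hausdorff space is discrete and therefore satisfies the local compactness and local connectedness hypotheses of Theorem~\ref{th1}.
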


\begin{proof}
For a finite set $X$, we have $H(X)=S_n(X)$, where $S_n(X)$ is the symmetric group of 
permutations of $X$.  Since $|S_n(X)|=n!$, the corollary follows directly from Theorem~\ref{th1}.
\end{proof}


\section{Binary Actions of Groups}

\begin{definition}\label{def1}
Let $G$ be a topological group, and let $X$ be a space. 
A continuous map $\alpha :G\times X^2\to X$ is called a \textit{binary action} 
of $G$ on $X$ if   the following conditions hold: 
\begin{equation}\label{eq(1)}
\alpha (gh, t,x)=\alpha(g, t, \alpha(h,t,x)),
\end{equation}
\begin{equation}\label{eq(2)}
\alpha (e, t,x)=x,
\end{equation}
where $e$ is  the identity element of $G$, $g,h\in G$, and $t,x\in X$.

We refer to a space $X$ together with a fixed binary action of a group $G$, that is, to a triple 
$(G,X,\alpha)$, as a \textit{binary $G$-space}.
\end{definition}

Note that in the notation
\begin{equation}\label{eq00}
g(t,x)=\alpha (g, t,x)
\end{equation}
relations \eqref{eq(1)} and \eqref{eq(2)} take the form 
$$gh(t,x)=g(t, h(t,x)), \quad e(t,x)=x.$$

\begin{example}\label{pr1}
The continuous map $\alpha :G\times G^2\to G$ defined by 
\begin{equation}\label{eq2}
\alpha(g,h_1,h_2)=h_1gh_1^{-1}h_2, \quad \text{or} \quad g(h_1,h_2)=h_1gh_1^{-1}h_2,
\end{equation}
is a binary action of the topological group $G$ on itself.

Indeed, conditions \eqref{eq(1)} and \eqref{eq(2)} in Definition~\ref{def1} are satisfied: we have 
\begin{multline*}
    \alpha(gh,h_1,h_2)=h_1ghh_1^{-1}h_2=h_1gh_1^{-1}h_1hh_1^{-1}h_2=\\
    =\alpha(g, h_1, h_1hh_1^{-1}h_2)=\alpha(g, h_1, \alpha(h,h_1,h_2))
\end{multline*}
and
$$\alpha (e, h_1,h_2)=h_1eh_1^{-1}h_2=h_2$$
for all $g,h,h_1,h_2\in G$.
\end{example}

\begin{example}\label{pr2}
Let $GL(n, \mathbf{R})$ be the general linear group of degree $n$. Consider the continuous map
$\alpha:GL(n, \mathbf{R})\times \mathbf{R}^n\times \mathbf{R}^n \to \mathbf{R}^n$ defined by 
\begin{equation}\label{eq7-1}
\alpha (A,\mathbf{x},\mathbf{y})=(E-A)\mathbf{x}+A\mathbf{y},
\end{equation}
or, equivalently, 
\begin{equation}\label{eq7}
A(\mathbf{x},\mathbf{y})=(E-A)\mathbf{x}+A\mathbf{y},
\end{equation}
where $A\in GL(n, \mathbf{R})$, $E$ is the identity matrix, 
and $\mathbf{x},\mathbf{y} \in \mathbf{R}^n$.

Note that, for any $A, B \in GL(n, \mathbf{R})$, we have 
\begin{multline*}
A(\mathbf{x},B(\mathbf{x},\mathbf{y}))=(E-A)\mathbf{x}+A(B(\mathbf{x},\mathbf{y}))=
(E-A)\mathbf{x}+A((E-B)\mathbf{x}+B\mathbf{y})\\
=(E-A)\mathbf{x}+A(E-B)\mathbf{x}+AB\mathbf{y}=(E-A+A-AB)\mathbf{x}+AB\mathbf{y}\\
=(E-AB)\mathbf{x}+AB\mathbf{y}=AB(\mathbf{x},\mathbf{y})
\end{multline*}
and
$$E(\mathbf{x},\mathbf{y})=(E-E)\mathbf{x}+E\mathbf{y}=\mathbf{y}.$$
Therefore, relation~\eqref{eq7} defines a binary action of the general linear group 
$GL(n, \mathbf{R})$ on
the $n$-dimensional vector space $\mathbf{R}^n$.
\end{example}

Let $\alpha$ be a binary action of a topological group $G$ on a space $X$. 
For each $g\in G$, we define a continuous map $\alpha_g:X^2\to X$ as 
\begin{equation}\label{eq3}
\alpha_g(t,x)= \alpha(g,t,x).
\end{equation}
Considitions \eqref{eq(1)} and \eqref{eq(2)} imply that 
$\alpha_{gh}=\alpha_g * \alpha_h$ and $\alpha_e$ is the identity element in the monoid of 
binary operations on $X$.
Thus,  the following proposition is valid.

\begin{proposition}\label{pr01}
The map $g\to \alpha_g$ is a continuous homomorphism from $G$ to the group $H_2(X)$ of all invertible 
continuous binary operations.
\end{proposition}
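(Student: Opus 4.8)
The plan is to verify the three claims packed into Proposition~\ref{pr01}: that each $\alpha_g$ is an invertible binary operation, that $g\mapsto\alpha_g$ is a group homomorphism, and that this map is continuous. The first two facts are already essentially established by the remarks immediately preceding the statement, so the real work is the continuity.

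First I would record that $\alpha_{gh}=\alpha_g*\alpha_h$ and $\alpha_e=e$, both of which follow by unwinding definitions from conditions \eqref{eq(1)} and \eqref{eq(2)}: for $t,x\in X$,
\[
(\alpha_g*\alpha_h)(t,x)=\alpha_g(t,\alpha_h(t,x))=\alpha(g,t,\alpha(h,t,x))=\alpha(gh,t,x)=\alpha_{gh}(t,x),
\]
and $\alpha_e(t,x)=\alpha(e,t,x)=x=e(t,x)$. In particular $\alpha_g*\alpha_{g^{-1}}=\alpha_{g^{-1}}*\alpha_g=\alpha_e=e$, so each $\alpha_g$ lies in $H_2(X)$ and $g\mapsto\alpha_g$ is a homomorphism $G\to H_2(X)$.

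The main obstacle is the continuity of $g\mapsto\alpha_g$ with respect to the compact-open topology on $H_2(X)\subset C_2(X)=C(X^2,X)$. Here the standard adjunction argument applies: continuity of a map $G\to C(X^2,X)$ is implied by continuity of the associated map $G\times X^2\to X$, provided $X^2$ is reasonably behaved (e.g. locally compact Hausdorff, which holds whenever $X$ is locally compact Hausdorff — and in the setting of interest, following Theorems~\ref{th011}--\ref{th1}, $X$ is locally compact and locally connected). Concretely, given a subbasic open set $W(K,U)$ in $C(X^2,X)$ with $K\subset X^2$ compact and $U\subset X$ open, I would show that $\{g\in G:\alpha_g\in W(K,U)\}=\{g:\alpha(\{g\}\times K)\subset U\}$ is open: for each $g_0$ in this set, $\alpha^{-1}(U)$ is an open neighborhood of the compact set $\{g_0\}\times K$ in $G\times X^2$, so by the tube lemma there is an open $V\ni g_0$ in $G$ with $V\times K\subset\alpha^{-1}(U)$, i.e. $V$ is contained in the set in question. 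Since the subbasic sets $W(K,U)$ generate the topology of $C(X^2,X)$, this proves continuity into $C_2(X)$, and hence into $H_2(X)$ with the subspace topology.

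Finally I would note that since the subbase for the compact-open topology on $H_2(X)$ used in Theorem~\ref{th1} consists of sets $W(K\times K',U)$ with $K,K'$ compact in $X$, and every such product is compact in $X^2$, the argument above covers these subbasic sets as a special case; so no separate verification is needed. Collecting the three parts yields the proposition.
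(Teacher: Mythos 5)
Your proposal is correct, and where it overlaps with the paper it takes the same route: the paper's entire justification is the remark preceding the proposition that conditions \eqref{eq(1)} and \eqref{eq(2)} give $\alpha_{gh}=\alpha_g*\alpha_h$ and $\alpha_e=e$, which is exactly your first paragraph (together with the observation, left implicit in the paper, that $\alpha_g*\alpha_{g^{-1}}=e$ places $\alpha_g$ in $H_2(X)$). The continuity claim is not argued in the paper at all, and your tube-lemma argument supplies it correctly; the only small inaccuracy is the hedge that this direction of the exponential correspondence needs $X^2$ to be locally compact --- it does not, as your own concrete argument (which uses only compactness of $\{g_0\}\times K$) demonstrates.
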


Thus, the elements of the group $G$ can be treated as continuous binary operations on the 
space $X$ which act by the rule $g(t,x)=\alpha_g(t,x)= \alpha(g,t,x)$.

Take any $t\in X$. Consider the continuous map $\alpha_t:G\times X\to X$
defined by 
\begin{equation}\label{eq1}
\alpha_t(g,x)= \alpha(g,t,x).
\end{equation}

\begin{proposition}
The map $\alpha_t$ is an action of the group $G$ on the space $X$.
\end{proposition}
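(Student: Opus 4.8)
The plan is to verify the two axioms of a $G$-action directly from the definition of $\alpha_t$ in \eqref{eq1} and the two binary-action axioms \eqref{eq(1)} and \eqref{eq(2)}. Continuity of $\alpha_t:G\times X\to X$ is immediate, since $\alpha_t$ is the composition of $\alpha:G\times X^2\to X$ with the continuous map $G\times X\to G\times X^2$ sending $(g,x)$ to $(g,t,x)$ (the insertion of the fixed point $t$ in the middle coordinate is continuous). So the only real content is checking the algebraic identities.

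First I would check the identity axiom: using \eqref{eq1} and then \eqref{eq(2)}, we get $\alpha_t(e,x)=\alpha(e,t,x)=x$ for all $x\in X$, so $\alpha_t(e,-)=1_X$. Next I would check the composition axiom: for $g,h\in G$ and $x\in X$,
\[
\alpha_t(gh,x)=\alpha(gh,t,x)=\alpha(g,t,\alpha(h,t,x))=\alpha(g,t,\alpha_t(h,x))=\alpha_t(g,\alpha_t(h,x)),
\]
where the second equality is \eqref{eq(1)} and the remaining equalities just unfold \eqref{eq1}. This is exactly condition (1) in the definition of a $G$-space, with $\theta=\alpha_t$.

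There is essentially no obstacle here: the statement is a formal consequence of the definitions, and the proof is a two-line computation together with the trivial continuity remark. The one point worth flagging is that the binary-action axiom \eqref{eq(1)} is stated with the \emph{same} second argument $t$ on both sides, which is precisely what makes the substitution legitimate and makes $\alpha_t$ an honest action rather than something weaker; I would make that explicit so the reader sees where the ``fixed $t$'' hypothesis is used. Concretely, the write-up would consist of: (i) the continuity observation; (ii) the identity-element check via \eqref{eq(2)}; and (iii) the associativity-type check via \eqref{eq(1)}, concluding that $\alpha_t$ satisfies conditions (1) and (2) of the definition of a $G$-space and is therefore an action of $G$ on $X$.
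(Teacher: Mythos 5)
Your proof is correct and follows essentially the same route as the paper: both verify the two action axioms by direct substitution of \eqref{eq(1)} and \eqref{eq(2)} into the definition \eqref{eq1} of $\alpha_t$. The extra continuity remark and the observation about the fixed $t$ are welcome but do not change the argument.
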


\begin{proof}
The map $\alpha_t$ satisfies the conditions in the definition of an action 
of a group on a topological space. Indeed, we have 

(1) $\alpha_t(gh,x)=\alpha(gh,t,x)=\alpha(g, t, \alpha(h,t,x))=
\alpha(g, t, \alpha_t(h,x))=\alpha_t(g, \alpha_t(h,x))$ and 

(2) $\alpha_t(e,x)=\alpha(e,t,x)=x$

\noindent
for all $g,h\in G$ and $x\in X$.
\end{proof}

Thus, \textit{a binary action $\alpha$ of a group $G$ on a space $X$ induces the family  
$\{\alpha_t\}$, $t\in X$, of 
``ordinary'' actions of $G$ on $X$}.

In the case of the binary action \eqref{eq2} (see Example~\ref{pr1}), all induced 
actions of $G$ on itself are equivalent. To be more precise,
the homeomorphism $f:G\to G$ given by $f(g)=h^{-1}gh$, where $h$ is any fixed element of $G$, 
is an equivalence with respect to the actions $\alpha_e$ and $\alpha_h$. Indeed,
\begin{multline*}               
f(\alpha_e(g,\tilde{g}))=f(e^{-1}ge\tilde{g})=f(g\tilde{g})=h^{-1}g\tilde{g}=\\
    =h^{-1}ghh^{-1}\tilde{g}h=h^{-1}ghf(\tilde{g})=\alpha_h(g,f(\tilde{g})).
\end{multline*}

The binary action of the general linear group $GL(n, \mathbf{R})$ on
$\mathbf{R}^n$ defined by \eqref{eq7-1} (see Example~\ref{pr2}) 
induces the family $\{\alpha_\textbf{a}; \textbf{a}\in \mathbf{R}^n\}$ of ordinary actions 
of the group $GL(n, \mathbf{R})$ on $\mathbf{R}^n$.
It is easy to show that 
all these actions are equivalent. To be more precise, 
the map $f(\textbf{x})=\textbf{x}-\textbf{a}$ is an equivariant self-homeomorphism
of the space $\mathbf{R}^n$ with respect to the actions $\alpha_\textbf{a}$ 
and $\alpha_\textbf{0}$.  Moreover, the action $\alpha_{\textbf{0}}$ is 
multiplication of a given matrix by a given element of $\mathbf{R}^n$: 
$\alpha_{\textbf{0}}(A,\textbf{y})=\alpha(A,\textbf{0},\textbf{y})=\textbf{0}+A\textbf{y}=
A\textbf{y}$.

Given $A\subset X$ and $g\in G$, we set $gA=\{g(a,a'); a,a'\in A\}$. Similarly, 
given $K\subset G$, we set
$KA=\{g(a,a'); g\in K, a,a'\in A\}$.

\begin{theorem}
Let $A$ be a subset of a binary $G$-space $X$, and let $K$ be a subset of a 
compact topological group $G$. Then the following assertions hold: 

(i) if $A$ is open, then so is $KA$; 

(ii) if $A$ is compact and $K$ is closed, then $KA$ is compact. 
\end{theorem}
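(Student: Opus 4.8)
The plan is to reduce both statements to standard facts about continuous maps and compactness, using the continuous map $\alpha : G \times X^2 \to X$ directly rather than working with the induced binary operations. For part (i), I would fix a point $gx$ with $g \in K$ and $x = (a,a') \in A \times A$. Since $G$ is a topological group and $\alpha_g \in H_2(X)$, Theorem~\ref{th01} tells us that $\alpha_g(a,\cdot) = (\alpha_g)_a$ is a homeomorphism of $X$; more to the point, the map $\alpha_g : X^2 \to X$ is an \emph{open} map onto its image in the relevant sense — but that is not quite automatic, so instead I would argue as follows. For each fixed $g \in K$, the set $gA = \alpha_g(A \times A)$ need not be open, so the naive ``union of open sets'' argument fails; rather, I would use that $\alpha_g$ is invertible with continuous inverse $\alpha_{g^{-1}}$ (Proposition~\ref{pr01}), whence $gA = (\alpha_{g^{-1}})^{-1}(A)\cap(\text{something})$ — this is getting complicated.

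Let me restart the sketch more carefully. For (i): write $KA = \bigcup_{g \in K} gA$. It suffices to show each $gA$ is open, \emph{provided} we also know the union is open, which it would be as a union of open sets. To see $gA$ is open: note $gA = \{g(a,a') : a,a' \in A\}$. Fix $y \in gA$, say $y = g(a_0,a_0')$. I claim a neighborhood of $y$ lies in $gA$: since $(\alpha_g)_{a_0} : X \to X$ is a homeomorphism (Theorem~\ref{th01}) and $a_0' \in A$ with $A$ open, the set $(\alpha_g)_{a_0}(A) = \{g(a_0, a') : a' \in A\}$ is open and contains $y$; and this set is contained in $gA$. Hence $gA$ is open, so $KA$ is open, giving (i) — and notice this uses only invertibility of $\alpha_g$, not compactness of $G$.

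For (ii): assume $A$ compact and $K$ closed in the compact group $G$, so $K$ is compact. Then $K \times A \times A$ is compact (Tychonoff / finite product of compact spaces), and $KA = \alpha(K \times A \times A)$ is the image of a compact set under the continuous map $\alpha$, hence compact. This is immediate once one observes $KA$ coincides with $\alpha(K \times (A\times A))$ under the identification of $\alpha$ with its uncurried form, and that a closed subset of a compact space is compact. So (ii) is essentially a one-line argument.

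The main obstacle, such as it is, lies entirely in part (i): one must resist the (false in general) temptation to claim $gA$ is open simply because $\alpha_g$ is a homeomorphism of $X$ — it is a homeomorphism in the \emph{second} variable only, for each fixed first variable. The correct move is to fix the first coordinate $a_0 \in A$ and use that $x \mapsto g(a_0,x)$ is a homeomorphism of $X$ (Theorem~\ref{th01}), so it carries the open set $A$ to an open subset of $X$ lying inside $gA$; ranging over $y \in gA$ covers $gA$ by open sets. One should also double-check that the hypothesis in (i) is merely ``$A$ open'' with $K$ arbitrary and $G$ merely a topological group — compactness of $G$ is not needed for (i), only for (ii) (to pass from ``$K$ closed'' to ``$K$ compact''), and I would phrase the proof to make that transparent.
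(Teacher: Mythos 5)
Your proposal is correct and, despite the meandering first paragraph, lands on exactly the paper's argument: for (i) the decomposition $KA=\bigcup_{g\in K}\bigcup_{a\in A}g_a(A)$ with each $g_a(\cdot)=g(a,\cdot)$ a homeomorphism by Theorem~\ref{th01}, and for (ii) the observation that $KA=\alpha(K\times A^2)$ is the continuous image of a compact set. Your side remark that compactness of $G$ is used only in (ii) is also consistent with the paper's proof.
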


\begin{proof}
(i) Note that $KA=\bigcup\limits_{g\in K}\bigcup\limits_{a\in A}g_a(A)$, where $g_a:X\to X$ 
is the map defined by $g_a(x)=g(a,x)$, $x\in X$.  By virtue of Theorem~\ref{th01},   
$g_a:X\to X$ is a homeomorphism; therefore, the set $g_a(A)$ is open. Hence $KA$ is open as well.

(ii) If $K$ is closed, then it is also compact, because $G$ is a compact group. 
Thus, the set $K\times A^2$ is compact, and so is its continuous image $KA=\alpha(K\times A^2)$. 
\end{proof}

\begin{corollary}
Suppose that $G$ is a compact group, $X$ is a binary $G$-space, and $A\subset X$ is a 
subset of $X$.  If $A$ is open (compact), then so is $GA$.
\end{corollary}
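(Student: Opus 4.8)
The plan is simply to apply the preceding theorem with $K=G$. Recall that the theorem asserts: for any subset $K$ of a compact topological group $G$ and any subset $A$ of a binary $G$-space $X$, (i) if $A$ is open then $KA$ is open, and (ii) if $A$ is compact and $K$ is closed then $KA$ is compact. Since $GA$ is exactly $KA$ in the special case $K=G$, both halves of the corollary will follow once we check that $K=G$ satisfies the relevant hypotheses in each case.

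First I would treat the open case. Here $G$ is a (trivial) subset of the compact group $G$, so part (i) of the theorem applies verbatim with $K=G$: if $A$ is open, then $GA=KA$ is open. Next, for the compact case, the only point to observe is that $G$ is a closed subset of itself; this is immediate. Hence part (ii) of the theorem applies with $K=G$ and $A$ compact, giving that $GA=KA$ is compact.

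There is no real obstacle here: the corollary is the instance $K=G$ of the theorem, and the sole (routine) verification is that $K=G$ meets the hypotheses ``$K\subset G$'' in (i) and ``$K$ closed'' in (ii), both of which hold trivially. I would therefore present the proof in one or two sentences, citing the theorem directly.
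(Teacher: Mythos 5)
Your proposal is correct and matches the paper's intent exactly: the corollary is stated immediately after the theorem with no separate proof, precisely because it is the instance $K=G$, with $G$ trivially closed in itself for the compact case. Nothing further is needed.
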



\section{The Category of Binary $G$-Spaces}

Let $(G,X,\alpha)$ and $(G,Y,\beta)$ be two binary $G$-spaces.
\begin{definition}
We say that a continuous map $f:X\to Y$ is \textit{bi-equivariant} if 
$$f(\alpha(g,t,x))=\beta(g,f(t),f(x)),$$
or, equivalently,
$$f(g(t,x))=g(f(t),f(x)),$$
for all $g\in G$ and $t,x \in X$.
\end{definition}

We refer to a bi-equivariant map $f:X\to Y$ which is simultaneously a homeomorphism as 
a \textit{bi-equivalence} of binary $G$-spaces.
Note that the inverse map $f^{-1}:Y\to X$ is bi-equivariant as well. Indeed, we have 
\begin{multline*}
    f^{-1}(g(y,y'))=f^{-1}(g(f(x),f(x'))=f^{-1}(f(g(x,x')))=\\
    =g(x,x')=g(f^{-1}(y),f^{-1}(y')),
\end{multline*}
where $x,x'\in X$ and  $y,y'\in Y$.

The following assertion is valid.

\begin{theorem}
The binary $G$-spaces and bi-equivariant maps form a category.
\end{theorem}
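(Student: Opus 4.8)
The plan is to verify the three defining properties of a category: a well-defined class of objects, morphism sets with a composition operation, and the identity and associativity axioms. The objects are binary $G$-spaces $(G,X,\alpha)$, and for two such objects $(G,X,\alpha)$ and $(G,Y,\beta)$ the morphism set is the collection of bi-equivariant continuous maps $f:X\to Y$. Since every piece of structure here is inherited from $Top$, the only real content is checking that bi-equivariance is preserved under composition and that identity maps are bi-equivariant.

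First I would check that the identity map $1_X:X\to X$ is bi-equivariant as a map from $(G,X,\alpha)$ to itself: this is immediate, since $1_X(\alpha(g,t,x))=\alpha(g,t,x)=\alpha(g,1_X(t),1_X(x))$. Next I would verify closure under composition. Let $f:X\to Y$ be bi-equivariant from $(G,X,\alpha)$ to $(G,Y,\beta)$ and $h:Y\to Z$ be bi-equivariant from $(G,Y,\beta)$ to $(G,Z,\gamma)$. Then $h\circ f$ is continuous as a composite of continuous maps, and for all $g\in G$, $t,x\in X$ one computes
$$(h\circ f)(g(t,x))=h(f(g(t,x)))=h(g(f(t),f(x)))=g(h(f(t)),h(f(x)))=g((h\circ f)(t),(h\circ f)(x)),$$
using bi-equivariance of $f$ in the second step and of $h$ in the third. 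So $h\circ f$ is bi-equivariant.

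Finally I would note that composition of bi-equivariant maps is just the ordinary composition of set maps, so the associativity law $(k\circ h)\circ f=k\circ(h\circ f)$ and the unit laws $f\circ 1_X=f=1_Y\circ f$ hold automatically, being already true in $Top$. Assembling these observations gives the category, which may naturally be denoted $G$-$Top_2$ or similar. I do not expect any genuine obstacle: the statement is a formal bookkeeping verification, and the only point requiring a moment's care is confirming that the composite $h\circ f$ satisfies the bi-equivariance identity, which is the short chained computation displayed above. The remark preceding the theorem already records that inverses of bi-equivalences are bi-equivariant, so the isomorphisms in this category are exactly the bi-equivalences.
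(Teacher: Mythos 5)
Your proof is correct: you verify exactly the points that matter (identity maps are bi-equivariant, composites of bi-equivariant maps are bi-equivariant, and associativity/unit laws are inherited from $Top$), and the chained computation for $h\circ f$ is right. The paper itself omits the proof as routine, and your argument is precisely the standard verification it implicitly relies on.
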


We denote the category of binary $G$-spaces and bi-equivariant maps by $G$-$Top^2$.

\begin{proposition}
Any bi-equivariant map $f:X\to Y$ is equivariant with respect to the induced actions $\alpha_t$ 
and $\beta_{f(t)}$ for any $t\in X$.
\end{proposition}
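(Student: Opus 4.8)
The plan is to unwind the definitions of bi-equivariance and of the induced actions, and observe that the required identity is literally a special case of the bi-equivariance condition with the first slot fixed. First I would recall that, given the binary $G$-space $(G,X,\alpha)$ and a point $t\in X$, the induced action $\alpha_t$ is defined by $\alpha_t(g,x)=\alpha(g,t,x)$, and similarly for $(G,Y,\beta)$ and the point $f(t)\in Y$ we have $\beta_{f(t)}(g,y)=\beta(g,f(t),y)$. The claim to be proved is that $f(\alpha_t(g,x))=\beta_{f(t)}(g,f(x))$ for all $g\in G$ and $x\in X$, i.e.\ that $f$ is equivariant as a map between the $G$-spaces $(X,\alpha_t)$ and $(Y,\beta_{f(t)})$.

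The key step is a one-line computation: for any $g\in G$ and $x\in X$,
\begin{multline*}
f(\alpha_t(g,x))=f(\alpha(g,t,x))=\beta(g,f(t),f(x))\\
=\beta_{f(t)}(g,f(x)),
\end{multline*}
where the middle equality is exactly the defining property of a bi-equivariant map $f:X\to Y$, and the outer two equalities are the definitions of the induced actions $\alpha_t$ and $\beta_{f(t)}$. Since $t\in X$ was arbitrary, this establishes the proposition for every induced pair $(\alpha_t,\beta_{f(t)})$.

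There is essentially no obstacle here: the statement is a direct consequence of reading the bi-equivariance identity $f(\alpha(g,t,x))=\beta(g,f(t),f(x))$ with $t$ held fixed, together with the definitions already recorded in the paper. The only point worth a word of care is that the target action on $Y$ must be the one induced at the point $f(t)$ (not at some other point of $Y$), which is built into the formulation of the statement; once that is noted, the proof is complete.
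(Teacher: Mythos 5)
Your proof is correct and follows exactly the same one-line computation as the paper: unwind $\alpha_t$, apply the bi-equivariance identity, and recognize the result as $\beta_{f(t)}(g,f(x))$. Nothing to add.
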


\begin{proof}
Indeed, 
$$f(\alpha_t(g,x))=f(\alpha(g,t,x))=\beta(g,f(t),f(x)) = 
\beta_{f(t)}(g,f(x)).$$
\end{proof}

Now, let $X$ be any $G$-space. On $X$ we define a binary $G$-action by 
\begin{equation}\label{eq11}
g(x,x')=g\cdot x'
\end{equation}
for all $g\in G$ and $x, x'\in X$.

Note that if $X$ and $Y$ are $G$-spaces, then any equivariant map $f:X\to Y$
is bi-equivariant with respect to the action~\eqref{eq11}. Indeed,
$$f(g(x,x')=f(g\cdot x')=g\cdot f(x')=g(f(x), f(x')).$$

Thus, the category $G$-$Top$ is a subcategory of the category $G$-$Top^2$. 
We have the following chain of natural extensions of categories:
$$Top\subset G\text{-}Top\subset G\text{-}Top^2.$$

Let $(G, X, \alpha)$ be a binary $G$-space. Consider the $G$-space 
\mbox{$(G, X\times X, \tilde{\alpha})$} on which the group $G$ acts as 
\begin{equation*}\label{eq10}
\tilde{\alpha}(g, x,x')=(x, \alpha(g,x,x')).
\end{equation*}
Using the notations $\tilde{\alpha}(g, x,x')=g\cdot (x,x')$ and $\alpha(g,x,x')=g(x,x')$, we 
rewrite this formula as 
$$g\cdot (x,x')=(x, g(x,x')).$$

Note that if $X$ and $Y$ are binary $G$-spaces, then any bi-equivariant map $f:X\to Y$
generates the equivariant map $\tilde{f}:X\times X\to Y\times Y$  defined by 
\begin{equation*}\label{eq9}
\tilde{f}(x,x')=(f(x), f(x')),
\end{equation*}
where $x,x'\in X$. Indeed,
\begin{multline*}
\tilde{f}(g\cdot(x,x'))=\tilde{f}(x, g(x,x'))=(f(x), f(g(x,x')))\\
=(f(x), g(f(x),f(x')))=g\cdot(f(x),f(x'))= g\cdot\tilde{f}(x, x').
\end{multline*}

\textit{This correspondence is a covariant functor from the category $G$-$TOP^2$ 
to the category $G$-$TOP$}.

\section{Invariant Sets}

\begin{definition}
We say that a subset $A\subset X$ is \textit{invariant} with respect to the binary action of a 
group $G$ if $GA=A$.
\end{definition}

It is easy to see that the intersection $A\cap B$ of invariant  sets $A, B \subset X$ 
is invariant.
However, the union $A\cup B$ of two invariant sets is not  generally invariant. 
Indeed, any one-point set $\{x\}$, $x\in \mathbf{R}^n$, is invariant with respect to 
the binary action \eqref{eq7-1}  of the general linear group $GL(n, \mathbf{R})$ on
the $n$-dimensional vector space $\mathbf{R}^n$ (see Example~\ref{pr2}), since  
$A(x,x)=(E-A)x+Ax=x$ for all $A\in GL(n, \mathbf{R})$. However, the union 
$\{x\}\cup \{y\}$ of two one-point sets is not invariant, because 
$GL(n, \mathbf{R})\{x,y\}=\mathbf{R}^n$.

\begin{question}
What are binary $G$-spaces in which the union $A\cup B$ of any invariant subsets $A, B \subset X$ 
is invariant?
\end{question}

\begin{definition}
The orbit of an element $x\in X$ of a binary $G$-space $X$ is defined as the  
minimal invariant set $[x]\subset X$ containing $x$.
\end{definition}

Obviously,  $x\in G(x,x)\subset [x]$ for all $x\in X$. Therefore, if $G(x,x)$ is 
invariant, then $G(x,x)= [x]$. The following problem naturally arises.
\begin{question}
In what binary $G$-spaces is the set $G(x,x)$ invariant?
\end{question}

The solution of this problem in a special case is given by Theorem~\ref{th100} below.

\begin{definition}
A binary $G$-space $X$ is said to be \textit{distributive} if 
\begin{equation}\label{eq1-1}
g(h(x,x'), h(x,x''))=h(x,g(x', x''))
\end{equation}
for all $x, x', x'' \in X$ and $g, h\in G$.
\end{definition}

\begin{example}
A group $G$ with binary action \eqref{eq2} (see Example~\ref{pr1}) is a distributive binary 
$G$-space. Indeed, for any $g,h,k,k_1,k_2\in G$, we have 
\begin{multline*}
g(h(k,k_1), h(k,k_2))=g(khk^{-1}k_1, khk^{-1}k_2)= \\
=khk^{-1}k_1gk_1^{-1}kh^{-1}k^{-1}khk^{-1}k_2=khk^{-1}k_1gk_1^{-1}k_2= \\
=h(k,k_1gk_1^{-1}k_2)=h(k,g(k_1, k_2)).
\end{multline*}
\end{example}

\begin{theorem}\label{th100}
If $X$ is a distributive binary $G$-space, then the set $G(x,x)$ is invariant for any $x\in X$.
\end{theorem}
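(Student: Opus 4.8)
The plan is to show that $G(x,x)$ is invariant by verifying $G\bigl(G(x,x)\bigr) = G(x,x)$, i.e. that applying an arbitrary $g \in G$ to an arbitrary pair of elements of $G(x,x)$ lands back in $G(x,x)$. Fix $x \in X$. A typical pair of elements of $G(x,x)$ has the form $h_1(x,x)$ and $h_2(x,x)$ for some $h_1, h_2 \in G$, so I need to examine $g\bigl(h_1(x,x), h_2(x,x)\bigr)$ and show it equals $k(x,x)$ for some $k \in G$.

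The key step is to massage $h_2(x,x)$ into the shape $h_1(x, \cdot)$ so that distributivity \eqref{eq1-1} applies with the common ``first slot'' entry $x$ replaced by $h_1(x,x)$. Concretely, using the binary action axiom \eqref{eq(1)} in the form $gh(t,x) = g(t,h(t,x))$, I can write, for any $a \in G$,
\[
(h_1 a)(x,x) = h_1\bigl(x, a(x,x)\bigr),
\]
so choosing $a$ appropriately lets me express a second element of $G(x,x)$ as $h_1(x, a(x,x))$. Since $h_1(x,x) = h_1(x, e(x,x))$ as well, distributivity \eqref{eq1-1} with $h := h_1$, $x' := e(x,x) = x$, $x'' := a(x,x)$, i.e. in the form
\[
g\bigl(h_1(x,x), h_1(x, a(x,x))\bigr) = h_1\bigl(x, g(x, a(x,x))\bigr),
\]
collapses the outer $g$-action through $h_1$. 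Now $g(x, a(x,x)) = (ga)(x,x)$ by \eqref{eq(1)} again, so the right-hand side becomes $h_1\bigl(x, (ga)(x,x)\bigr) = (h_1 g a)(x,x)$, which is visibly an element of $G(x,x)$. Running this with $a$ chosen so that $h_1 a = h_2$, i.e. $a = h_1^{-1}h_2$, handles the general pair $\bigl(h_1(x,x), h_2(x,x)\bigr)$ and gives $g\bigl(h_1(x,x), h_2(x,x)\bigr) = (h_1 g h_1^{-1} h_2)(x,x) \in G(x,x)$. This proves $G\bigl(G(x,x)\bigr) \subseteq G(x,x)$.

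For the reverse inclusion, note that $e(x,x) = x \in G(x,x)$, and for any $k \in G$ we have $k(x,x) = k\bigl(e(x,x), k(x,x)\bigr)$... more simply, $k(x,x)$ is the $k$-action applied to the pair $(x,x) \in G(x,x) \times G(x,x)$, so $k(x,x) \in G\bigl(G(x,x)\bigr)$ trivially; hence $G(x,x) \subseteq G\bigl(G(x,x)\bigr)$. Combining the two inclusions yields $G\bigl(G(x,x)\bigr) = G(x,x)$, which is exactly the statement that $G(x,x)$ is invariant.

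The main obstacle is bookkeeping: one must be careful that the ``first slot'' entry stays fixed as the element $x$ (respectively $h_1(x,x)$) throughout each application of distributivity, since \eqref{eq1-1} only relates expressions sharing the same first-slot witness. The trick of rewriting $h_2(x,x)$ as $h_1\bigl(x, (h_1^{-1}h_2)(x,x)\bigr)$ via the action axiom is what makes the two arguments of the outer $g$ compatible with \eqref{eq1-1}; once that normalization is in place the computation is a short chain of applications of \eqref{eq(1)}, \eqref{eq(2)}, and \eqref{eq1-1}. I would present it as a single displayed \texttt{multline*} computation analogous to the ones already used in the paper's examples.
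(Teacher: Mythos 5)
Your proposal is correct and follows essentially the same route as the paper's own proof: rewrite the second element as $h_1\bigl(x,(h_1^{-1}h_2)(x,x)\bigr)$ via the action axiom, apply distributivity \eqref{eq1-1} to pull the outer $g$ inside, and collapse to $(h_1gh_1^{-1}h_2)(x,x)\in G(x,x)$ --- exactly the paper's chain $g(h(x,x),k(x,x))=hgh^{-1}k(x,x)$. Your explicit verification of the reverse inclusion $G(x,x)\subseteq G\bigl(G(x,x)\bigr)$ is a small tidy addition the paper leaves implicit, but the substance is identical.
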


\begin{proof}
We must show that  $g(h(x,x), k(x,x))\in G(x,x)$ 
for any $h(x,x)$, $k(x,x)\in G(x,x)$ and any $g\in G$. By virtue of~\eqref{eq1-1}, we have 
\begin{multline*}
g(h(x,x), k(x,x))=g(h(x,x), h(x,h^{-1}k(x,x)))=\\
=h(x, g(x, h^{-1}k(x,x))) = h(x, gh^{-1}k(x, x))=\\
=hgh^{-1}k(x, x)\in G(x,x),
\end{multline*}
because $hgh^{-1}k\in G$.
\end{proof}

\begin{question}
Is the converse of Theorem~\ref{th100} true?
\end{question}

\begin{question}
Given a  distributive $G$-space $X$ and $x'\notin G(x,x)$, is the  set $G(x,x')$ invariant?
\end{question}

\bibliographystyle{plain}
\bibliography{ArXiv_Groups_of_binary_operations_and_binary_G-spaces}

\end{document}